\theoremstyle{plain}
\newtheorem{thm}{\protect\theoremname}
  \theoremstyle{plain}
  \newtheorem{lem}[thm]{\protect\lemmaname}
  \theoremstyle{plain}
  \newtheorem{prop}[thm]{\protect\propositionname}
  \theoremstyle{plain}
  \newtheorem{cor}[thm]{\protect\corollaryname}
  \providecommand{\corollaryname}{Corollary}
  \providecommand{\lemmaname}{Lemma}
  \providecommand{\propositionname}{Proposition}
\providecommand{\theoremname}{Theorem}
\begin{document}

\title{A Note on Jointly Modeling Edges and Node Attributes of a Network}

\author{Haiyan Cai}

\keywords{Network Model; Dynamical Network; Random Graph; Limiting Graph.}

\address{Department of Mathematics and Computer Science, University of Missouri
- St. Louis, St. Louis, MO 63121.}

\email{haiyan\_cai@umsl.edu}

\subjclass[2000]{60, 62, 82.}
\begin{abstract}
We are interested in modeling networks in which the connectivity among
the nodes and node attributes are random variables and interact with
each other. We propose a probabilistic model that allows one to formulate
jointly a probability distribution for these variables. This model
can be described as a combination of a latent space model and a Gaussian
graphical model: given the node variables, the edges will follow independent
logistic distributions, with the node variables as covariates; given
edges, the node variables will be distributed jointly as multivariate
Gaussian, with their conditional covariance matrix depending on the
graph induced by the edges. We will present some basic properties
of this model, including a connection between this model and a dynamical
network process involving both edges and node variables, the marginal
distribution of the model for edges as a random graph model, its one-edge
conditional distributions, the FKG inequality, and the existence of
a limiting distribution for the edges in a infinite graph.
\end{abstract}

\maketitle

\section{Introduction}

In modeling networks (\cite{Goldenbreg-Zheng-Fienberg-Airoldi,Kolaczyk,Newman}),
the usual focus is on the network topologies, or the configurations
of edges. A network is typically modeled as a random graph (\cite{Bollobas,Chung-Lu,Durrett})
defined in terms of a probability distribution of the edge status.
However, networks in many applied problems are not always just about
links or edges. More extensive data for certain networks, containing
information not only for edges but also for some node variables or
attributes, are becoming available. For such data and for some important
problems in network study, a limitation of the random graph model
is the absence of information from the node variables. Such a model
is incapable of catching interactions between edges and nodes. In
a social network problem, for example, one might be interested in
studying users' behaviors (or some dynamical attribute in the user-profiles)
as a function of the network topology, or vise versa (\cite{McAuley-Leskovec,MVGD}),
or in a gene network problem, one might be interested in inferring
gene expression levels as a function of an underlying regulatory network,
or vise versa (\cite{Wang-Huang}). When it comes to analyzing behaviors
of the nodes in a network or the influence of node behaviors on network
topologies, the utility of the random graph models becomes limited.

The latent space model (\cite{Handcock-Raftery,Hoff-Raftery-Handcock,Kolaczyk})
is another popular network model. It does assume the dependence of
the edge probabilities on some node variables. The model however treats
these variables as latent variables, paying little attention to the
inference on these variables. On the other hand, a Gaussian graphical
model describes a distribution for node variables on a network with
built-in edge information of the network (through the inverse covariance
matrix). It however treats the network topology as a static parameter
which remains constant regardless how the node variables will change.

In this paper, we propose a joint probability distribution for both
edges and node variables (Section 2). A study of such a model can
shed lights on how edges and nodes interact with each other in a network
so that information for both edges and nodes can be utilized in studying
the networks. In a way, our model can be described as a combination
of the latent space model and the Gaussian graphical model: given
the node variables, the edges will follow independent logistic distributions,
with the node variables as covariates in the logistic function; given
edges, the node variables will be distributed jointly as multivariate
Gaussian, with their conditional covariance matrix depending on the
graph induced by the edges. In terms of the marginal distribution
for the node variables, our model generalizes the Gaussian graphical
model to allow for the underlying graphical structure to be random.
In other words, it is now a mixture of Gaussian graphical models over
all the possible edge configurations of the network, and the weights
in this mixture are the probabilities of the corresponding network
configurations. Our model also leads to a non-trivial and interesting
random graphical model when we take the marginal distribution for
the edges in our model. This graphical model is different from all
the other models that we have been in the literature so far. As we
will see, the probability of a network configuration under this random
graphical model is proportional to the square root of the determinant
of the corresponding conditional covariance matrix of the node variables. 

A reason that motivates us to propose such a model is that it provides
a sensible framework for modeling network dynamics in which the edge
status and the node variables change their values over time, as we
will explain in the end of Section 2. We will see that the dynamical
system updates edge status and node variables alternatively according
to the conditional distributions between edges and nodes. The equilibrium
(stable) distribution of the dynamical system is then exactly the
joint distribution we propose here. In other words, our model can
be viewed as the stable probability law of a dynamical network process.
For data containing both edges and node variables, our model can be
fairly easy to fit, because of the simple forms of the conditional
probabilities. If only edge data are available, techniques developed
for the latent space models may be adopted. We will not discuss these
issues any further here. This paper is mainly about basic probabilistic
properties of the model.

We will pay particular attention to the marginal distribution for
edges of our model. An explicit formula for the conditional probability
of one edge given all other edges is given in Section 3. The formula
has a simple form and can be useful for predicting one edge's status
based on observations from other edges. We will show in Section 4
that the probability distribution for edges is positively associated
in the sense that it satisfies the FKG inequality (\cite{Grimmett}),
a property that is shared by many well-known models in statistical
mechanics. We then give a weak convergence result for the edge distribution
based on the FKG inequality. To ensure consistent results in statistical
analysis for very large networks, it is essential that the model,
as a probability law, has a limiting distribution. The concept of
the limit for random graphs we use here is that of the infinite-volume
Gibbs distributions on graphs, involving both nodes and edges (see
\cite{Grimmett} for an example). Our approach does not depend on
the concept of metrics for graphs and therefore is different from
those seen in another line of research on graph limits (\cite{Borgs-Chayes-Lovasz-Vesztergombi-I,Borgs-Chayes-Lovasz-Vesztergombi-II,Diaconis-Janson,Lovasz}).
We also note that there is a close similarity between our model and
the random-cluster model derived in the statistical mechanics (\cite{Grimmett}):
what our model is to the Gaussian graphical model is in some sense
similar to what the random-cluster model is to Ising or Potts models.
This is indeed another reason that motivated us to propose the model
in this note.

\section{The Random Gaussian Graphical Model}

We will call our model the random Gaussian graphical model and formulate
it in this section. Let $G=(V,E)$ be a finite simple graph (undirected,
unweighted, no loops, no multiple edges) with $E$ being a subset
of $V\times V$ which is fixed. Suppose $|V|=m$ and $|E|=n$. For
convenience we identify $V$ as the integer set $V=\{1,...,m\}$.
Suppose associated with each node $i\in V$ there is a random variable
$X_{i}$, representing an attribute of node $i$. Let $X=\{X_{1},...,X_{m}\}$.
We will use $x\in\mathcal{R}^{m}$ to denote a generic value of $X$.
We write $(i,j)$ for the edge in $E$ which is incident with the
nodes $i,j\in V$. 

We will consider random sub-graphs of $G$ in which $V$ remains the
same and $E$ is reduced randomly to some subset of itself. Such a
random graph can be represented by a random adjacency matrix $A=\{A_{ij},\ i,j\in V\}$
in which all the diagonal elements $A_{ii}=0$, and for each edge
$(i,j)\in E$, $A_{ij}=A_{ji}=1$ if the edge is present in the random
graph, and $A_{ij}=A_{ji}=0$ if otherwise. It is always understood
that $A_{ij}\equiv0$ for all $(i,j)\not\in E$. We will call $A_{ij}$
an edge variable. With a slight abuse of notation, we let $\mathcal{A}=\{0,1\}^{E}$
be the set of all possible values of $A$. We will use $a=a^{T}\in\mathcal{A}$
to denote a generic value of the adjacency matrix $A$.

By ``random Gaussian graphical model'' we mean the following joint
probability density for variables $A$ and $X$, defined on the space
$\mathcal{A}\times\mathcal{R}^{m}$, 
\begin{equation}
\mu(a,x)\equiv\frac{1}{Z}\exp\left\{ -\frac{1}{2}H(a,x)\right\} ,\ (a,x)\in\mathcal{A}\times\mathcal{R}^{m},\label{eq:the model}
\end{equation}
where 
\begin{equation}
H(a,x)=\alpha\sum_{i}x_{i}^{2}+\beta\sum_{(i,j)\in E}a_{ij}(x_{i}-x_{j})^{2}\label{eq:H}
\end{equation}
for some parameters $\alpha>0$ and $\beta\ge0$, and 
\[
Z=\sum_{a\in\mathcal{A}}\int_{\mathcal{R}^{m}}\mu(a,x)dx
\]
is the normalizing constant. It is clear that this $Z$ is always
finite. 

We note that in this model, if all $a_{ij}=1$ it becomes an usual
Gaussian graphical model (as we will see below). Therefore we can
consider the Gaussian graphical model as a ``full model'' relative
to the given edge set $E$ while model (\ref{eq:the model}) as a
model that allows us to ``turn off'' some edges in $E$ at random
(and therefore remove the associated correlation terms among the nodes
in (\ref{eq:H})) according to the values of $a_{ij}$'s. Model (\ref{eq:the model})
is like a Gaussian graphical model on a less connected graph obtained
by removing some edges from $E$ randomly, and therefore the name
``random Gaussian graphical model''. In particular, if all $a_{ij}=0$
and therefore there are no connections among the nodes in the graph,
$X_{i}$'s are i.i.d. $N(0,1/\alpha)$ random variables. The joint
distribution of values of $a_{ij}$'s are in turn depended on $X_{i}$'s.
In general, the likelihood of connectivity among the nodes is determined
by the magnitudes of the differences between the corresponding node
variables and the value of $\beta$. On the other hand, the connectivity
of the nodes will, in turn, affect the distribution of the node variables. 

To study this model, it is more convenient to rewrite $\mu(a,x)$
in a matrix form as follows. Let $e_{i}$ be an $m$-dimensional column
vector such that its $i$th element is 1 and all others are 0. We
define a matrix, as a function of $A=a$,
\begin{equation}
Q(a)=\alpha I+\beta\sum_{(i,j)\in E}a_{ij}(e_{i}-e_{j})(e_{i}-e_{j})^{T},\ a\in\mathcal{A}.\label{eq:Q}
\end{equation}
Then
\[
H(a,x)=x^{T}Q(a)x.
\]
Note that the term 
\[
L(a)=\sum_{(i,j)\in E}a_{ij}(e_{i}-e_{j})(e_{i}-e_{j})^{T}
\]
in (\ref{eq:Q}) is the graphical Laplacian of the graph for $A=a$.
Also note that since $H(a,x)>0$ for all $x\not\not\equiv0$ and all
$a\in\mathcal{A}$, $Q(a)$ is positive definite for all $a\in\mathcal{A}$.
Now let 
\[
\Sigma(a)=Q(a)^{-1}.
\]
Then
\begin{eqnarray*}
Z & = & \sum_{a}\int_{\mathcal{R}^{m}}\exp\left\{ -\frac{1}{2}x^{T}Q(a)x\right\} dx\\
 & = & \sum_{a}|2\pi\Sigma(a)|^{\text{1/2}},
\end{eqnarray*}
where $|2\pi\Sigma(a)|$ is the determinant of the matrix $2\pi\Sigma(a)$.
It follows that
\begin{equation}
\mu(a,x)=\frac{|\Sigma(a)|^{\text{1/2}}}{\sum_{a'}|\Sigma(a')|^{\text{1/2}}}\phi(x|0,\Sigma(a)),\label{eq:pi}
\end{equation}
where $\phi(x|0,\Sigma(a))$ is the density function of a 0 mean multivariate
normal distribution with covariance matrix $\Sigma(a)$, $N(0,\Sigma(a))$.

The marginal distribution of (\ref{eq:pi}) for $A$ provides a model
for the random graph and it takes the form:
\begin{equation}
\mu_{A}(a)=\frac{1}{\kappa}|\Sigma(a)|^{1/2},\ a\in\mathcal{A},\label{eq:piA}
\end{equation}
where $\kappa$ is the normalizing constant
\[
\kappa=\sum_{a'}|\Sigma(a')|^{1/2}.
\]
Therefore $\mu_{A}(a)$ is simply proportional to $|\Sigma(a)|^{1/2}$.
The marginal distribution for $X$ can be written as
\begin{equation}
\mu_{X}(x)=\sum_{a}\mu_{A}(a)\phi(x|0,\Sigma(a)),\ x\in\mathcal{R}^{m}.\label{eq:piX}
\end{equation}
This is a mixture of the usual Gaussian graphical models with the
precision matrices $Q(a)$ and the corresponding weight $\mu_{A}(a)$
for each realization of $A$.

Modeling dynamical networks in which connectivity and node variables
are changing in time is an important problem (\cite{Krivitsky-Handcock,Snijder-Van-de-Bunt-Steglich}).
The distribution proposed in (\ref{eq:pi}) relates naturally to such
a dynamical network process which we can easily describe below.

The conditional distributions of (\ref{eq:pi}) have the following
particularly simple forms. Given $X$, $A_{ij}$'s in $A$ are independent
and the corresponding probabilities take the logistic form:
\begin{equation}
\mu(a|x)=\prod_{(i,j)\in E}\left(\frac{1}{1+\exp\left\{ \beta(x_{i}-x_{j})^{2}\right\} }\right)^{a_{ij}}\left(\frac{\exp\left\{ \beta(x_{i}-x_{j})^{2}\right\} }{1+\exp\left\{ \beta(x_{i}-x_{j})^{2}\right\} }\right)^{1-a_{ij}}.\label{eq:a-given-x}
\end{equation}
A latent space model is thus the conditional distribution $\mu(a|x)$
with $x$ being treated as latent variables. Given $A=a$, $X$ is
simply distributed as a multivariate normal: 
\begin{equation}
\mu(x|a)=\phi(x|0,\Sigma(a)).\label{eq:x-given-a}
\end{equation}
This is a centered Gaussian graphical model with the precision matrix
$Q(a)$.

We can think of the joint distribution $\mu(a,x)$ as an equilibrium
or stable state of the following network process driven by the conditional
distributions (\ref{eq:a-given-x}) and (\ref{eq:x-given-a}). Let's
denote the process by $\{A^{(t)},X^{(t)}\}$, $t=0,1,2,...$.

Initially, the network consists of disconnected nodes with i.i.d.
normal random variables in $X^{(0)}=\{X_{1}^{(0)},...,X_{m}^{(0)}\}$:
\[
A^{(0)}\equiv0_{n\times n},\ X^{(0)}\sim N(0,\alpha^{-1}I_{m}).
\]
Suppose at time $t$, the current network is in state $\{A^{(t)},X^{(t)}\}$.
The network updates itself after $t$ at some random time points which
we assume are independent of all $A^{(u)}$ and $X^{(u)}$, $u\le t$.
The connectivity of the network is updated so that, independently,
some $A_{ij}$'s have their status switch between 0 and 1 for some
$(i,j)\in E$ and others remain unchanged, according to the conditional
distribution (\ref{eq:a-given-x}) with the given $X^{(t)}$:
\[
A^{(t+1)}\sim\mu(a|X^{(t)}).
\]
This change of connectivity then modifies the conditional covariances
among the node variables and at some later independent random time
points, values of these variables are updated according to (\ref{eq:x-given-a})
with the updated conditional covariance matrix $\Sigma(A^{(t+1)})$:
\[
X^{(t+1)}\sim N(0,\Sigma(A^{(t+1)})).
\]
The system evolves in time by repeating this updating process.

One can checked that the joint distribution of $A^{(t)}$ and $X^{(t)}$
in this process converges to $\mu$ as $t\to\infty$. In fact, the
dynamical process we just described is exactly the process of a ``Gibbs
sampler'' in MCMC computations. The simple forms of the conditional
probabilities (\ref{eq:a-given-x}) and (\ref{eq:x-given-a}) allow
us to simulate $\mu$ easily via an MCMC procedure.

\section{Conditional Distribution of One Edge Given Others}

We now turn to the marginal distribution of $A$. For notational simplicity
we will often write $\mu(B)$ for $\mu_{A}(B)$ for any edge event
$B$. This section is about dependence in distribution of one edge
on other edges. We will give an explicit formula for this conditional
distribution. It shows in Proposition \ref{prop:conditional dist}
below that this conditional probability depends on other edges only
through the conditional variance of the difference of the corresponding
node variables. This result allows us to show the FKG inequality and
a weak convergence property for our graphical model.

First, some notation. Let us view the network model as a system $\mathcal{S}=(V,E,A,X,\mu)$
with its components defined as in Section 2. Suppose the system $\mathcal{S}$
is such that it does not contain edge $(i',j')$ in $E$. Suppose
the system $\mathcal{S}'=(V,E',A',X,\mu')$ is an augmented version
of $\mathcal{S}$, obtained by adding to $\mathcal{S}$ the edge $(i',j')$.
Note that both systems $S$ and $S'$ have the same node set $V$.
Therefore we have $E'=E\cup\{(i',j')\}$, 
\begin{equation}
A'=A+A_{i'j'}'(e_{i'}e_{j'}^{T}+e_{j'}e_{i'}^{T}),\label{eq:A}
\end{equation}
where $A_{i'j'}'\in\{0,1\}$ is the new edge variable corresponding
to the edge $(i',j')$, and
\begin{equation}
\mathcal{A}'=\{a':a'=a+a_{i'j'}'(e_{i'}e_{j'}^{T}+e_{j'}e_{i'}^{T}),a\in\mathcal{A},a_{i'j'}'\in\{0,1\}\}.\label{eq:a}
\end{equation}
Therefore the only difference between $A$ and $A'$ is that $A_{i'j'}=0$
in $A$ but $A_{i'j'}'$ can be either 0 or 1 in $A'$. This also
applies to the difference between an $a$ and an $a'$. Following
(\ref{eq:Q}), we define the conditional precision matrix for $\mathcal{S}'$
as
\[
Q'(a')=Q(a)+\beta a_{i'j'}'(e_{i'}-e_{j'})(e_{i'}-e_{j'})^{T}.
\]
For simplicity, we will often suppress $a$ and $a'$ in our notation
below when there is no danger of confusion. We will write, for example,
$Q'$ and $\Sigma$ for $Q'(a')$ and $\Sigma(a)$ respectively. Finally
we define $\mu'$ for $\mathcal{S}'$ accordingly based on $Q'$ or,
equivalently, the its inverse $\Sigma'$. It is important to point
out that the marginal distribution for $A$ under $\mu'$ is not the
same as that under $\mu$. In fact $\mu$ and $\mu'$ are related
through the relation

\[
\mu(A=a)=\mu'(A=a|A_{i'j'}'=0).
\]

We first establish a general relationship between $|\Sigma^{'}|$
and $|\Sigma|$ (recall that the probability $\mu(A=a)$ is proportional
to $|\Sigma|^{1/2}$). For all $i,j\in V$, let $\sigma_{ij}$ and
$\sigma_{ij}'$ be the entries of $\Sigma$ and $\Sigma'$ respectively,
the conditional covariance matrices of $X$ under $\mu$ and $\mu'$
respectively, and let 
\[
\delta_{ij}=\sigma_{ii}+\sigma_{jj}-2\sigma_{ij}\ \mbox{and}\ \delta_{ij}'=\sigma_{ii}'+\sigma_{jj}'-2\sigma_{ij}',
\]
the conditional variances of $X_{i}-X_{j}$ under the distributions
$N(0,\Sigma)$ and $N(0,\Sigma')$ respectively. Note that $\delta_{ij}$
and $\delta'_{ij}$ are functions of the network configurations $a$
and $a'$ respectively.
\begin{lem}
\label{lem:lemma}Let $a\in\mathcal{A}$ and $a'\in\mathcal{A}'$
be as related in (\ref{eq:a}). Then
\begin{equation}
|\Sigma'|=\left(1+\beta\delta_{i'j'}\right)^{-a_{i'j'}'}|\Sigma|,\label{eq:d update}
\end{equation}
and $\delta_{i'j'}$ and $\delta'_{i'j'}$ are related through 
\begin{equation}
\left(1-\beta\delta_{i'j'}'\right)^{a_{i'j'}'}\left(1+\beta\delta_{i'j'}\right)^{a_{i'j'}'}=1.\label{eq:dov id}
\end{equation}
\end{lem}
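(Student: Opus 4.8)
The plan is to exploit the fact that passing from $Q$ to $Q'$ is a \emph{rank-one} update of the precision matrix, so that both claimed identities will fall out of the two classical rank-one formulas: the matrix determinant lemma for (\ref{eq:d update}), and the Sherman--Morrison formula for (\ref{eq:dov id}). First I would dispose of the trivial case $a_{i'j'}'=0$: here $Q'(a')=Q(a)$, hence $\Sigma'=\Sigma$ and $\delta_{i'j'}'=\delta_{i'j'}$, and both identities reduce to $1=1$ because every exponent vanishes. So from now on I assume $a_{i'j'}'=1$ and abbreviate $v=e_{i'}-e_{j'}$, so that $Q'=Q+\beta vv^{T}$.

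The single observation that drives everything is the identification of the quadratic form $v^{T}\Sigma v$ with the conditional variance $\delta_{i'j'}$. Indeed,
\[
v^{T}\Sigma v=(e_{i'}-e_{j'})^{T}\Sigma(e_{i'}-e_{j'})=\sigma_{i'i'}+\sigma_{j'j'}-2\sigma_{i'j'}=\delta_{i'j'},
\]
and likewise $v^{T}\Sigma'v=\delta_{i'j'}'$. For (\ref{eq:d update}) I would then apply the matrix determinant lemma to the rank-one update:
\[
|Q'|=|Q+\beta vv^{T}|=|Q|\left(1+\beta\,v^{T}Q^{-1}v\right)=|Q|\left(1+\beta\delta_{i'j'}\right).
\]
Taking reciprocals and recalling that $\Sigma=Q^{-1}$ and $\Sigma'=(Q')^{-1}$ yields $|\Sigma'|=\left(1+\beta\delta_{i'j'}\right)^{-1}|\Sigma|$, which is precisely the $a_{i'j'}'=1$ instance of (\ref{eq:d update}).

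For (\ref{eq:dov id}) I would apply the Sherman--Morrison formula to invert the same rank-one update:
\[
\Sigma'=(Q+\beta vv^{T})^{-1}=\Sigma-\frac{\beta\,\Sigma vv^{T}\Sigma}{1+\beta\,v^{T}\Sigma v}.
\]
Sandwiching this identity between $v^{T}$ and $v$ and substituting $v^{T}\Sigma v=\delta_{i'j'}$ gives
\[
\delta_{i'j'}'=\delta_{i'j'}-\frac{\beta\delta_{i'j'}^{2}}{1+\beta\delta_{i'j'}}=\frac{\delta_{i'j'}}{1+\beta\delta_{i'j'}}.
\]
A one-line rearrangement then shows $1-\beta\delta_{i'j'}'=\left(1+\beta\delta_{i'j'}\right)^{-1}$, so that $\left(1-\beta\delta_{i'j'}'\right)\left(1+\beta\delta_{i'j'}\right)=1$, which is exactly (\ref{eq:dov id}) when $a_{i'j'}'=1$. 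I do not expect a genuine obstacle here: once the update is recognized as rank one, the statement is a direct application of two standard identities. The only point requiring care is the bookkeeping that identifies $v^{T}\Sigma v$ with the conditional variance $\delta_{i'j'}$ (and the harmless verification that the $a_{i'j'}'=0$ case collapses both sides to $1$), together with noting that $1+\beta\delta_{i'j'}>0$ since $\Sigma$ is positive definite, so that all the reciprocals above are legitimate.
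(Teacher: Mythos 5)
Your proof is correct. For the determinant identity (\ref{eq:d update}) you and the paper do essentially the same thing: the matrix determinant lemma you invoke is exactly the rank-one specialization of the Sylvester identity $|W+UV|=|W|\,|I+VW^{-1}U|$ that the paper uses, with the identification $v^{T}\Sigma v=\delta_{i'j'}$ as the key bookkeeping step in both. For the second identity (\ref{eq:dov id}) the routes diverge: the paper stays entirely at the level of determinants, writing $|Q|=|(Q+UV)-UV|=|Q+UV|\,\bigl|I-V(Q+UV)^{-1}U\bigr|$ and comparing this with the first expansion to read off $\left(1-\beta\delta_{i'j'}'\right)\left(1+\beta\delta_{i'j'}\right)=1$ without ever computing $\delta_{i'j'}'$; you instead invert the rank-one update by Sherman--Morrison and obtain the explicit formula $\delta_{i'j'}'=\delta_{i'j'}/(1+\beta\delta_{i'j'})$, from which the identity follows by rearrangement. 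Your version does slightly more work but buys slightly more: the explicit update of $\delta_{i'j'}'$ is precisely the $i=i'$, $j=j'$ case of (\ref{eq:vod update}) in Lemma \ref{prop:update Sigma}, and it is also the relation $\delta_{0}(a)/\delta_{1}(a)=1+\beta\delta_{0}(a)$ that the paper has to rederive later in the proof of Proposition \ref{prop:conditional dist}, so your argument front-loads that computation. Your separate treatment of the $a_{i'j'}'=0$ case and the observation that $1+\beta\delta_{i'j'}>0$ (so all reciprocals are legitimate) are harmless additions that the paper leaves implicit.
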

\begin{proof}
By definition 
\[
\Sigma'=(Q+\beta a_{i'j'}'(e_{i'}-e_{j'})(e_{i'}-e_{j'})^{T})^{-1}.
\]
From the Sylvester's identity for determinants 
\[
|I_{n}+UV|=|I_{m}+VU|,
\]
which holds for any $n\times m$ matrix $U$ and $m\times n$ matrix
$V$, it follows that for any $n\times n$ invertible $W$,
\begin{equation}
\left|W+UV\right|=\left|W\right|\cdot\left|I_{n}+(W^{-1}U)V\right|=\left|W\right|\cdot\left|I_{m}+VW^{-1}U\right|.\label{eq:sylvester}
\end{equation}
Now set $W=Q,$ $U=\beta a_{i'j'}(e_{i'}-e_{j'}),$ and $V=(e_{i'}-e_{j'})^{T}.$
Since
\[
VW^{-1}U=\beta a_{i'j'}'(e_{i'}-e_{j'})^{T}\Sigma(e_{i'}-e_{j'})=\beta a_{i'j'}'\delta_{i'j'},
\]
(\ref{eq:d update}) follows from (\ref{eq:sylvester}) with $W+UV=Q^{'}$.
The identity (\ref{eq:dov id}) is based on the observation that $W+UV=Q^{'}$
is nonsingular and
\[
|W|=|(W+UV)-UV|=|W+UV|\times|I-V(W+UV)^{-1}U|
\]
and a comparison of this equation to (\ref{eq:sylvester}).
\end{proof}
To derive a formula for the conditional distribution of one edge given
other edges, we formulate this problem in terms of the systems $\mathcal{S}$
and $\mathcal{S}'$. Let $\delta'_{i'j'}$ be as defined above and
further let 
\[
\delta_{1}(a)\equiv\delta_{i'j'}'(a':a'_{ij}=a_{ij}\mbox{ for \ensuremath{(i,j)\in E} and }a'_{i'j'}=1)
\]
 and 
\[
\delta_{0}(a)\equiv\delta_{i'j'}^{'}(a':a_{ij}'=a_{ij}\mbox{ for \ensuremath{(i,j)\in E} and }a_{i'j'}'=0).
\]
These are the conditional variances under $\mu'$ of the difference
$X_{i}-X_{j}$ for a given $a'$ in which $a_{i'j'}'=1$ and $a_{i'j'}'=0$
respectively. Note that in this notation, $\delta_{0}(a)$ and $\delta_{i'j'}(a)$
have the same value. 
\begin{prop}
\label{prop:conditional dist}The conditional probability of $A_{i'j'}$
given $A=a$ under $\mu'$, is
\begin{equation}
\mu'(A_{i'j'}=1|A=a)=\frac{1}{1+\sqrt{1+\beta\delta_{i'j'}(a)}}\label{eq:conditionP1}
\end{equation}
or, equivalently,
\begin{equation}
\mu'(A_{i'j'}=1|A=a)=\frac{\sqrt{\delta_{1}(a)}}{\sqrt{\delta_{0}(a)}+\sqrt{\delta_{1}(a)}}.\label{eq:conditionP2}
\end{equation}
\end{prop}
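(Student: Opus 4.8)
The plan is to obtain the conditional probability directly from the marginal edge distribution (\ref{eq:piA}) applied to the augmented system $\mathcal{S}'$, and then to convert the resulting determinant ratio into an expression in $\delta_{i'j'}$ using Lemma \ref{lem:lemma}. First I would fix the configuration $a$ on $E$ and note that, inside $\mathcal{A}'$, the event $\{A=a\}$ pins down every edge variable \emph{except} $A_{i'j'}$, so it consists of exactly the two atoms: the configuration $a'_{0}$ with $a'_{i'j'}=0$ and the configuration $a'_{1}$ with $a'_{i'j'}=1$. Conditioning then gives
\[
\mu'(A_{i'j'}=1\mid A=a)=\frac{\mu'_{A'}(a'_{1})}{\mu'_{A'}(a'_{0})+\mu'_{A'}(a'_{1})}.
\]
Since (\ref{eq:piA}) for $\mathcal{S}'$ gives $\mu'_{A'}(a')\propto|\Sigma'(a')|^{1/2}$ with a normalizing constant independent of $a'$, that constant cancels and the computation reduces to the ratio of $|\Sigma'(a'_{1})|^{1/2}$ to $|\Sigma'(a'_{0})|^{1/2}$.

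Next I would feed in the determinant update (\ref{eq:d update}). When $a'_{i'j'}=0$ the extra rank-one term in $Q'$ vanishes, so $\Sigma'(a'_{0})=\Sigma(a)$ and $|\Sigma'(a'_{0})|=|\Sigma(a)|$; when $a'_{i'j'}=1$, (\ref{eq:d update}) yields $|\Sigma'(a'_{1})|=(1+\beta\delta_{i'j'}(a))^{-1}|\Sigma(a)|$. Substituting these into the ratio cancels the common factor $|\Sigma(a)|^{1/2}$ and leaves
\[
\mu'(A_{i'j'}=1\mid A=a)=\frac{(1+\beta\delta_{i'j'}(a))^{-1/2}}{1+(1+\beta\delta_{i'j'}(a))^{-1/2}}.
\]
Clearing the square-root factor from numerator and denominator gives the stated form (\ref{eq:conditionP1}).

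For the equivalent expression (\ref{eq:conditionP2}), I would use the identification $\delta_{0}(a)=\delta_{i'j'}(a)$ noted in the text together with the reciprocal identity (\ref{eq:dov id}), which with $a'_{i'j'}=1$ reads $(1-\beta\delta_{1}(a))(1+\beta\delta_{0}(a))=1$. Solving for $1+\beta\delta_{0}(a)$ shows $1+\beta\delta_{i'j'}(a)=\delta_{0}(a)/\delta_{1}(a)$, hence $\sqrt{1+\beta\delta_{i'j'}(a)}=\sqrt{\delta_{0}(a)}/\sqrt{\delta_{1}(a)}$; inserting this into (\ref{eq:conditionP1}) and simplifying produces (\ref{eq:conditionP2}).

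I do not anticipate a deep obstacle, because Lemma \ref{lem:lemma} already absorbs the only nontrivial linear algebra (the Sylvester determinant identity). The single point demanding care is the bookkeeping between the two systems $\mathcal{S}$ and $\mathcal{S}'$: one must correctly recognize that conditioning on $A=a$ restricts the normalization to just the two augmented states $a'_{0},a'_{1}$, and one must identify $\Sigma'(a'_{0})$ with $\Sigma(a)$ \emph{before} applying the lemma, so that the determinant update is invoked only for the $a'_{i'j'}=1$ branch.
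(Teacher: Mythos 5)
Your proposal is correct and follows essentially the same route as the paper: both express $\mu'(A_{i'j'}=1\mid A=a)$ as the ratio $|\Sigma'(a'_{1})|^{1/2}/(|\Sigma'(a'_{0})|^{1/2}+|\Sigma'(a'_{1})|^{1/2})$ from (\ref{eq:piA}), cancel the common factor $|\Sigma(a)|^{1/2}$ via (\ref{eq:d update}) to get (\ref{eq:conditionP1}), and then use (\ref{eq:dov id}) with $a'_{i'j'}=1$ to derive $1+\beta\delta_{i'j'}(a)=\delta_{0}(a)/\delta_{1}(a)$ and hence (\ref{eq:conditionP2}). Your explicit identification of $\Sigma'(a'_{0})$ with $\Sigma(a)$ and the two-atom decomposition of the event $\{A=a\}$ just spell out details the paper leaves implicit.
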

\begin{proof}
The first formula follows directly from (\ref{eq:d update}) and (\ref{eq:piA})
by noting that for $a$ and $a'$ as related in (\ref{eq:a}),
\[
\mu'(A_{i'j'}=1|A_{ij}=a_{ij},(i,j)\in E)=\frac{|\Sigma'(a':a_{i'j'}'=1)|^{1/2}}{|\Sigma'(a':a_{i'j'}'=0)|^{1/2}+|\Sigma'(a':a_{i'j'}'=1)|^{1/2}}
\]
and canceling out from both the numerator and the denominator the
common fraction $|\Sigma(a)|^{1/2}$. To obtain the second formula,
we note that with the new notation, (\ref{eq:dov id}) can be written
as
\[
\left(1-\beta\delta_{1}(a)\right)^{a_{i'j'}'}\left(1+\beta\delta_{0}(a)\right)^{a_{i'j'}'}=1
\]
which implies another interesting identity, when $a_{i'j'}'=1$,
\[
\frac{\delta_{0}(a)}{\delta_{1}(a)}=1+\beta\delta_{0}(a)=1+\beta\delta_{i'j'}(a).
\]
Plugging this into (\ref{eq:conditionP1}), we obtain (\ref{eq:conditionP2}).
\end{proof}
This proposition asserts that, among other things, the conditional
distribution of an edge $A_{ij}$ in the graph depends on the rest
of the edges through and only through the conditional standard deviations
of $X_{i}-X_{j}$.

\section{The FKG Inequality and the Infinite-Volume Weak Limit}

In this section we study an asymptotic property of the edge distribution
as the size of the graph approaches to infinity. We will first explore
a property called positive association for our model which, together
with the Proposition \ref{prop:conditional dist} in Section 3, will
allow us to show the existence of a weak limit (convergence in distribution)
on an infinite graph for this model. After Lemma \ref{prop:update Sigma}
below, the properties for our graphical model are developed in parallel
to some of those for the random-cluster model of \cite{Grimmett}.

We start with a relationship between the conditional covariance matrices
$\Sigma'$ and $\Sigma$ given edges. It displays explicitly how adding
one edge to the graph can affect the conditional covariances of the
random variables in $X$.
\begin{lem}
\label{prop:update Sigma}For all $a$ and $a'$ as related in (\ref{eq:a})
and for any $i,j\in V$,
\begin{equation}
\sigma_{ij}'=\sigma_{ij}-\frac{\beta a_{i'j'}'}{1+\beta\delta_{i'j'}}(\sigma_{ii'}-\sigma_{ij'})(\sigma_{ji'}-\sigma_{jj'})\label{eq:sigma update}
\end{equation}
and
\begin{equation}
\delta_{ij}'=\delta_{ij}-\frac{\beta a_{i'j'}'}{1+\beta\delta_{i'j'}}(\sigma_{ii'}-\sigma_{ij'}-\sigma_{ji'}+\sigma_{jj'})^{2}.\label{eq:vod update}
\end{equation}
\end{lem}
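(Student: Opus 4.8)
The plan is to recognize (\ref{eq:sigma update}) as a direct instance of the Sherman--Morrison rank-one update formula. From the definition of $Q'$ we have $Q' = Q + \beta a_{i'j'}' w w^{T}$ with $w = e_{i'}-e_{j'}$, so $\Sigma' = (Q')^{-1}$ differs from $\Sigma = Q^{-1}$ by a rank-one perturbation. First I would invoke Sherman--Morrison to write
\[
\Sigma' = \Sigma - \frac{\beta a_{i'j'}'\,\Sigma w w^{T}\Sigma}{1+\beta a_{i'j'}'\,w^{T}\Sigma w}.
\]
Since $w^{T}\Sigma w = (e_{i'}-e_{j'})^{T}\Sigma(e_{i'}-e_{j'}) = \sigma_{i'i'}+\sigma_{j'j'}-2\sigma_{i'j'} = \delta_{i'j'}$, the denominator is $1+\beta a_{i'j'}'\delta_{i'j'}$. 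Because $a_{i'j'}'\in\{0,1\}$, this equals the stated $1+\beta\delta_{i'j'}$ whenever $a_{i'j'}'=1$, while for $a_{i'j'}'=0$ the numerator already vanishes; hence the displayed denominator in (\ref{eq:sigma update}) is legitimate.

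Next I would read off the $(i,j)$ entry by sandwiching $\Sigma'$ between $e_{i}$ and $e_{j}$. The key identities are $e_{i}^{T}\Sigma w = \sigma_{ii'}-\sigma_{ij'}$ and $w^{T}\Sigma e_{j} = \sigma_{ji'}-\sigma_{jj'}$, the latter using the symmetry of $\Sigma$. Substituting these into $\sigma_{ij}' = e_{i}^{T}\Sigma' e_{j}$ yields (\ref{eq:sigma update}) at once.

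For (\ref{eq:vod update}) I would simply expand $\delta_{ij}' = \sigma_{ii}'+\sigma_{jj}'-2\sigma_{ij}'$ using the entrywise formula just proved for each of the three terms. The three correction numerators are $(\sigma_{ii'}-\sigma_{ij'})^{2}$, $(\sigma_{ji'}-\sigma_{jj'})^{2}$, and $2(\sigma_{ii'}-\sigma_{ij'})(\sigma_{ji'}-\sigma_{jj'})$; combining them with the signs inherited from $\delta_{ij}'$ produces a perfect square $[(\sigma_{ii'}-\sigma_{ij'})-(\sigma_{ji'}-\sigma_{jj'})]^{2} = (\sigma_{ii'}-\sigma_{ij'}-\sigma_{ji'}+\sigma_{jj'})^{2}$, which is exactly the bracketed factor in (\ref{eq:vod update}).

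The computation is entirely routine once Sherman--Morrison is in hand, so I do not anticipate a serious obstacle. The only point needing a moment of care is justifying that the denominator may be written as $1+\beta\delta_{i'j'}$ rather than $1+\beta a_{i'j'}'\delta_{i'j'}$, which the case split on $a_{i'j'}'\in\{0,1\}$ above resolves. If one prefers to keep the argument self-contained, Sherman--Morrison can itself be recovered from the Sylvester identity (\ref{eq:sylvester}) already employed in Lemma \ref{lem:lemma}, but the formula is standard enough to cite directly.
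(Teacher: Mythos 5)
Your proposal is correct and follows essentially the same route as the paper: the paper also obtains $\Sigma'$ from $\Sigma$ via the rank-one (Sherman--Morrison/Woodbury) update with $U=\beta a_{i'j'}'(e_{i'}-e_{j'})$, $V=(e_{i'}-e_{j'})^{T}$, reads off (\ref{eq:sigma update}) entrywise, and gets (\ref{eq:vod update}) by expanding the definition of $\delta_{ij}'$. Your explicit case split justifying the denominator $1+\beta\delta_{i'j'}$ versus $1+\beta a_{i'j'}'\delta_{i'j'}$ is a small point of care the paper passes over silently, but the arguments are otherwise identical.
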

\begin{proof}
Applying the matrix identity
\[
(W+UV)^{-1}=W^{-1}-W^{-1}V(I+UW^{-1}V)^{-1}UW^{-1},
\]
which holds for any valid (so that the inverses exist) and compatible
(so that the products are defined) matrices, we have, as in the proof
of the Proposition \ref{lem:lemma},
\[
\Sigma'=\Sigma-\frac{\beta a_{i'j'}'}{1+\beta(e_{i'}-e_{j'})^{T}\Sigma(e_{i'}-e_{j'})}\Sigma(e_{i'}-e_{j'})(e_{i'}-e_{j'})^{T}\Sigma.
\]
Now (\ref{eq:sigma update}) is just an element-wise version of this
equation, and (\ref{eq:vod update}) is a direct consequence of (\ref{eq:sigma update})
and the definitions of $\delta_{ij}$ and $\delta'_{ij}$.
\end{proof}
A special case of (\ref{eq:sigma update}) is $i=j$. It implies that
the conditional variance of $X_{i}$ for each $i\in V$ become smaller
when extra edges are added into the conditioning network or, in a
more general form, 
\begin{equation}
Var(X_{i}|a)\ge Var'(X_{i}|a'),\ \forall i\in V,\label{eq:Var}
\end{equation}
whenever $a\le a'$ in the sense that $a_{ij}\le a'_{ij}$ for all
$i,j\in V$. Similarly, (\ref{eq:vod update}) shows the conditional
variance of $X_{i}-X_{j}$ is also decreasing in $a$:
\begin{equation}
Var(X_{i}-X_{j}|a)\ge Var'(X_{i}-X_{j}|a').\label{eq:deltaVar}
\end{equation}
Hence a network with more edges has smaller conditional variances
of $X_{i}$ and $X_{i}-X_{j}$, for all $i,j\in V$. 

We now show that our graphical model possesses a nice property called
positive association which is characterized by the FKG inequality
below (\cite{Grimmett}). This inequality plays a fundamental role
in studying some well-known models in statistical mechanics, including
the Ising model and Potts model.
\begin{prop}
For any increasing functions $f$ and $g$ defined on $\{0,1\}^{E}$,
we have
\begin{equation}
E_{\mu}(fg)\ge E_{\mu}(f)E_{\mu}(g).\label{eq:FKG}
\end{equation}
\end{prop}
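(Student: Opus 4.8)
The plan is to deduce the FKG inequality from the FKG theorem for positive measures on $\{0,1\}^{E}$ (see \cite{Grimmett}), which guarantees positive association once $\mu$ satisfies the lattice (log-supermodularity) condition $\mu(a\vee b)\,\mu(a\wedge b)\ge\mu(a)\,\mu(b)$ for all $a,b\in\{0,1\}^{E}$, where $a\vee b$ and $a\wedge b$ denote the coordinatewise maximum and minimum. The measure $\mu=\mu_{A}$ is strictly positive because $\mu_{A}(a)\propto|\Sigma(a)|^{1/2}>0$ for every $a$, so the hypotheses of that theorem are available. First I would reduce the lattice condition to the case of two configurations differing in exactly two coordinates, which is the standard reduction of log-supermodularity to pairs of coordinates; an elementary cross-multiplication then shows that this two-edge condition, for a pair of edges $e,f$ with all remaining edges held fixed, is equivalent to the statement that the one-edge conditional probability $\mu(A_{e}=1\mid\text{all other edges})$ is a nondecreasing function of the status of $f$. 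Thus it suffices to prove that for each edge $e$, the quantity $\mu(A_{e}=1\mid\text{all other edges})$ is nondecreasing in the configuration of the remaining edges.

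To establish this monotonicity I would fix an edge $e=(i',j')$ and identify the situation with the two systems of Section 3: let $\mathcal{S}$ be the system on the edge set $E\setminus\{e\}$ and let $\mathcal{S}'$ be its augmentation by the edge $e$, so that the measure $\mu'$ of $\mathcal{S}'$ is exactly our $\mu$ on $E$. Proposition \ref{prop:conditional dist} then gives $\mu(A_{e}=1\mid A=a)=(1+\sqrt{1+\beta\delta_{i'j'}(a)})^{-1}$, where $\delta_{i'j'}(a)$ is the conditional variance of $X_{i'}-X_{j'}$ computed in $\mathcal{S}$ given the configuration $a$ of the edges in $E\setminus\{e\}$. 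This expression is a strictly decreasing function of $\delta_{i'j'}(a)$, so the desired monotonicity follows once I know that $\delta_{i'j'}(a)$ is nonincreasing in $a$. That is precisely the content of (\ref{eq:deltaVar}) together with the monotonicity of the conditional variance in the edge configuration: turning on more of the remaining edges only decreases the conditional variance of $X_{i'}-X_{j'}$. Combining the two facts, increasing the configuration on $E\setminus\{e\}$ decreases $\delta_{i'j'}$ and hence increases $\mu(A_{e}=1\mid\text{rest})$, which yields the two-edge lattice condition and therefore the full lattice condition.

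The genuinely routine pieces here are the cross-multiplication identity and the reduction of the lattice condition to pairs of coordinates, both of which are standard. The step requiring the most care is the bookkeeping that ties the single-measure FKG setting to the two-system notation of Section 3: one must check that $\delta_{i'j'}(a)$ depends only on the configuration of the edges in $E\setminus\{e\}$ and is well defined in $\mathcal{S}$ even though $(i',j')$ is not an edge there, and that the comparison (\ref{eq:deltaVar}) is applied within the fixed sub-system $\mathcal{S}$ as its edge configuration increases, rather than across systems built on different edge sets. Once this identification is made correctly, the monotonicity of the conditional edge probability is immediate from Proposition \ref{prop:conditional dist} and (\ref{eq:deltaVar}), and the FKG inequality (\ref{eq:FKG}) follows from the FKG theorem.
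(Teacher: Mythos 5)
Your proposal is correct and follows essentially the same route as the paper: verify the FKG lattice condition by reducing it to monotonicity of the one-edge conditional probability, then combine the formula (\ref{eq:conditionP1}) of Proposition \ref{prop:conditional dist} with the monotonicity of $a\mapsto\delta_{i'j'}(a)$ from (\ref{eq:vod update}). The only cosmetic difference is that you spell out the reduction to pairs of coordinates and note strict positivity of $\mu$ explicitly, where the paper simply cites Grimmett's Theorem 2.24 for the equivalence.
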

\begin{proof}
For any $a,a'\in\{0,1\}^{E}$, let 
\[
a\vee a'=\{\max\{a_{ij},a_{ij}'\}\mbox{ for all }(i,j)\in E\mbox{ and }a_{ij}\in a,a'_{ij}\in a'\}
\]
and 
\[
a\wedge a'=\{\min\{a_{ij},a_{ij}'\}\mbox{ for all }(i,j)\in E\mbox{ and }a_{ij}\in a,a'_{ij}\in a'\}.
\]
It is well-known that (\ref{eq:FKG}) is a consequence of the FKG
lattice condition
\begin{equation}
\mu(a\vee a')\mu(a\wedge a')\ge\mu(a)\mu(a').\label{eq:FKG condition}
\end{equation}
A statement and a proof of this result can be found in \cite{Grimmett},
page 25-26. According to again \cite{Grimmett} (Theorem 2.24), the
condition (\ref{eq:FKG condition}) is in turn equivalent to the ``one-point
conditional probability condition''. This later condition states
that for any $(i,j)\in E$ and a given $a\in\{0,1\}^{E-\{(i,j)\}}$,
if $A^{-ij}\equiv\{A_{kl}:(k,l)\in E-\{(i,j)\}\}$, the conditional
probability $\mu(A_{ij}=1|A^{-ij}=a)$ is increasing in $a$. To show
this is true in our case, we note that according to (\ref{eq:conditionP1})
of Proposition \ref{prop:conditional dist}, such a conditional probability
is decreasing in the quantity $\delta_{ij}(a)$. Also (\ref{eq:vod update})
of Lemma \ref{prop:update Sigma} implies that for every $(i,j)\in E$,
the function $a\to\delta_{ij}(a)$ is decreasing:
\[
\delta_{ij}(a')\ge\delta_{ij}(a''),\mbox{ whenever }a'\le a''.
\]
It follows that the one-point conditional probability condition holds,
and therefore (\ref{eq:FKG}) is true.
\end{proof}
Finally we consider any sequence of finite graphs $G^{(n)}=(V^{(n)},E^{(n)})$,
$n=1,2,...$, such that $V^{(n)}\subset V^{(n+1)}$, $E^{(n)}\subset E^{(n+1)}$,
and the size of $E^{(n)}$ tends to infinity. Let $A^{(n)}$ be the
random adjacency matrix defined on $G^{(n)}$ with the probability
distribution $\mu_{n}\equiv\mu_{A^{(n)}}$ as given in (\ref{eq:piA}):
\begin{equation}
\mu_{n}(a)=\frac{|\Sigma(a)|^{1/2}}{\sum_{a'\in\{0,1\}^{E^{(n)}}}|\Sigma(a')|^{1/2}},\ a\in\{0,1\}^{E^{(n)}}.\label{eq:mu_n}
\end{equation}
We are interested in the consistency property of the sequence of the
distributions $\{\mu_{n}\}$ so that for any event $B$ depending
on some edges in a finite graph $G^{(n_{0})}$, the probabilities
$\mu_{n}(B)$, $n\ge n_{0}$, has a limit as $n\to\infty$. This problem
can be formulated as follows. 

Let 
\begin{equation}
V=\lim V^{(n)}\mbox{ and }E=\lim E^{(n)},\label{eq:V_and_E}
\end{equation}
the limits of the increasing sets. Let $\Omega=\{0,1\}^{E}$ and $\mathcal{F}$
be the $\sigma$-field generated by the cylinder events of $\Omega$.
For each $n$, we can view $\mu_{n}$ as a probability measure defined
on $(\Omega,\mathcal{F})$ with a support on a subset 
\[
\Omega_{n}=\{0,1\}^{E^{(n)}}\times\{0\}^{E-E^{(n)}}
\]
of $\Omega$. The consistency problem is then the problem of weak
convergence of the sequence $\{\mu_{n}\}$ on $(\Omega,\mathcal{F})$.

Let $\mathcal{F}_{n}$ be the $\sigma$-field generated by the subsets
of $\Omega_{n}$. Then $\mathcal{F}_{n}\subset\mathcal{F}_{n+1}$.
We notice that for any $m<n$ and $B_{1}\in\mathcal{F}_{m}$, if $B_{2}\in\mathcal{F}_{n}$
is the event that $A_{ij}=0$ for all edges in $E^{(n)}-E^{(m)}$:
$B_{2}=(A_{ij}=0,\forall(i,j)\in E^{(n)}-E^{(m)})$, then according
to (\ref{eq:mu_n}) and (\ref{eq:Q}), 
\begin{equation}
\mu_{m}(B_{1})=\mu_{n}(B_{1}|B_{2}).\label{eq:mu_m_and_mu_n}
\end{equation}
We will say an edge event $B$ is increasing if the corresponding
indicator function $I_{B}(a)$ is increasing. If $B$ is decreasing
then the negative of its indicator function is increasing.
\begin{cor}
\label{cor:monotone}For any $m<n$ and any increasing event $B_{1}\in\mathcal{F}_{m}$,
\begin{equation}
\mu_{m}(B_{1})\le\mu_{n}(B_{1}).\label{eq:monotone}
\end{equation}
\end{cor}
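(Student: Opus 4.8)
The plan is to rewrite $\mu_{m}(B_{1})$ as a conditional probability under $\mu_{n}$ and then invoke the FKG inequality to compare it with the corresponding unconditional probability. By (\ref{eq:mu_m_and_mu_n}) we have $\mu_{m}(B_{1})=\mu_{n}(B_{1}\mid B_{2})$, where $B_{2}=(A_{ij}=0,\ \forall(i,j)\in E^{(n)}-E^{(m)})$. Hence it suffices to prove that $\mu_{n}(B_{1}\mid B_{2})\le\mu_{n}(B_{1})$; that is, conditioning on $B_{2}$ cannot raise the probability of the increasing event $B_{1}$.

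The key observation is that $B_{2}$ is a \emph{decreasing} event: its indicator equals $1$ exactly when the edge variables on $E^{(n)}-E^{(m)}$ all attain their smallest value $0$, so $I_{B_{2}}$ can only decrease as $a$ increases, and therefore $-I_{B_{2}}$ is increasing. Applying the FKG inequality (\ref{eq:FKG}) --- which holds for $\mu_{n}$ since $\mu_{n}$ is a measure of exactly the form to which the preceding proposition applies --- to the increasing functions $f=I_{B_{1}}$ and $g=-I_{B_{2}}$ gives $E_{\mu_{n}}(I_{B_{1}}(-I_{B_{2}}))\ge E_{\mu_{n}}(I_{B_{1}})E_{\mu_{n}}(-I_{B_{2}})$, which rearranges into the negative-correlation statement $\mu_{n}(B_{1}\cap B_{2})\le\mu_{n}(B_{1})\mu_{n}(B_{2})$.

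Dividing by $\mu_{n}(B_{2})$ then yields $\mu_{n}(B_{1}\mid B_{2})\le\mu_{n}(B_{1})$, and the corollary follows. To justify the division I would first record that $\mu_{n}(B_{2})>0$: by (\ref{eq:mu_n}) every configuration $a\in\{0,1\}^{E^{(n)}}$ carries mass proportional to $|\Sigma(a)|^{1/2}$, which is strictly positive because each $\Sigma(a)$ is positive definite, so the nonempty set $B_{2}$ has positive probability.

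The step requiring the most care is the verification that $B_{2}$ is decreasing together with the sign bookkeeping in applying FKG to $-I_{B_{2}}$; everything else is a one-line rearrangement. I do not expect a genuine obstacle here, since the substantive work has already been carried out in the FKG inequality and in the consistency identity (\ref{eq:mu_m_and_mu_n}), and this corollary is essentially their immediate combination.
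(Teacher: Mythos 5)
Your proposal is correct and follows essentially the same route as the paper: both use the consistency identity (\ref{eq:mu_m_and_mu_n}) to write $\mu_{m}(B_{1})=\mu_{n}(B_{1}\mid B_{2})$, observe that $B_{2}$ is decreasing with $\mu_{n}(B_{2})>0$, and apply the FKG inequality (\ref{eq:FKG}) to obtain $\mu_{n}(B_{1}\cap B_{2})\le\mu_{n}(B_{1})\mu_{n}(B_{2})$. Your additional sign bookkeeping with $-I_{B_{2}}$ and the positivity of $\mu_{n}(B_{2})$ simply makes explicit what the paper leaves implicit.
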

\begin{proof}
Let $B_{2}$ be as defined above. Then $B_{2}$ is a decreasing event
and $\mu_{n}(B_{2})>0$. The FKG inequality (\ref{eq:FKG}) in this
case implies
\[
\mu_{n}(B_{1}\cap B_{2})\le\mu_{n}(B_{1})\mu_{n}(B_{2}),
\]
or
\[
\mu_{n}(B_{1}|B_{2})\le\mu_{n}(B_{1}).
\]
(\ref{eq:monotone}) follows from this and (\ref{eq:mu_m_and_mu_n}).
\end{proof}
Now we have:
\begin{prop}
\label{prop:Limit}As $n\to\infty$, $A^{(n)}$ converges in distribution
to a random adjacency matrix $A$ of the graph $G$ in the probability
space $(\Omega,\mathcal{F},\nu)$ for some probability distribution
$\nu$.\end{prop}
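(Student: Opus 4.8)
The plan is to exploit the monotonicity supplied by Corollary \ref{cor:monotone}, which already yields convergence of $\mu_n(B)$ for every \emph{increasing} cylinder event $B$, and to bootstrap this into convergence of all finite-dimensional distributions, whence weak convergence on the compact product space $(\Omega,\mathcal{F})$ follows by standard arguments. This mirrors the construction of infinite-volume limits for the random-cluster model, as anticipated in the discussion preceding Lemma \ref{prop:update Sigma}.

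First I would fix an increasing cylinder event $B\in\mathcal{F}_m$. By Corollary \ref{cor:monotone}, the sequence $\{\mu_n(B)\}_{n\ge m}$ is nondecreasing and bounded above by $1$, hence convergent; write $\nu(B)$ for its limit. In particular, given any finite edge set $F\subset E$, choose $m$ with $F\subset E^{(m)}$; then for each $S\subseteq F$ the ``upper'' event $U_S=(A_{ij}=1\ \forall (i,j)\in S)$ is increasing and lies in $\mathcal{F}_m$, so $\lim_n\mu_n(U_S)$ exists.

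Next I would extend this to arbitrary elementary cylinders. For a configuration $\xi\in\{0,1\}^F$ put $S_1=\xi^{-1}(1)$, $S_0=F\setminus S_1$, and $C_\xi=(A_{ij}=\xi_{ij}\ \forall(i,j)\in F)$. Inclusion--exclusion over the ``$A_{ij}=0$'' constraints gives
\begin{equation}
\mu_n(C_\xi)=\sum_{T\subseteq S_0}(-1)^{|T|}\,\mu_n\!\left(U_{S_1\cup T}\right),
\end{equation}
a finite alternating sum of increasing-event probabilities each of which converges. Hence $\lim_n\mu_n(C_\xi)$ exists for every finite $F$ and every $\xi$, and these limits form a \emph{consistent} family of finite-dimensional distributions: for each $n$ one has $\sum_{\xi}\mu_n(C_\xi)=1$ and the marginalization identity obtained by summing a coordinate out, and both relations persist in the limit since only finitely many terms are involved. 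I would then invoke Kolmogorov's extension theorem to obtain a probability measure $\nu$ on $(\Omega,\mathcal{F})$ with exactly these marginals. Because $\Omega=\{0,1\}^E$ carries the product topology of countably many finite discrete factors, it is compact and metrizable, and convergence of all finite-dimensional distributions is equivalent to weak convergence; thus $A^{(n)}$ converges in distribution to the random adjacency matrix $A$ governed by $\nu$.

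The main obstacle is precisely the passage from increasing events to general cylinders: Corollary \ref{cor:monotone} only delivers monotonicity, hence convergence, along increasing events, so the real work is the combinatorial inclusion--exclusion step together with verifying that the resulting pointwise limits assemble into a genuinely countably additive probability measure rather than a merely finitely additive set function. It is compactness of the configuration space (equivalently, Kolmogorov consistency with no escape of mass, since each $\mu_n$ is a probability measure on a compact space) that secures countable additivity and rules out any loss of mass in the limit.
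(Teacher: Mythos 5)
Your proposal is correct and takes essentially the same route as the paper: both obtain convergence of $\mu_n(B)$ for increasing cylinder events from the monotonicity in Corollary \ref{cor:monotone} and then pass to general events. The only difference is that you carry out explicitly, via inclusion--exclusion over the increasing events $U_S$ and a Kolmogorov/compactness argument on $\{0,1\}^{E}$, the extension step that the paper compresses into the assertion that increasing events form a convergence-determining class.
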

\begin{proof}
The statement in the proposition is equivalent to asserting the weak
convergence of $\mu_{n}$ to some $\nu$ on $(\Omega,\mathcal{F})$
or, equivalently, to the statement that there is a $\nu$ on $(\Omega,\mathcal{F})$
such that for every finite dimensional cylinder event $B\in\mathcal{F}_{n}\subset\mathcal{F}$,
\begin{equation}
\lim_{n}\mu_{n}(B)=\nu(B).\label{eq:limit}
\end{equation}
Note that $B$ is discrete with $\partial B=\emptyset$ in the discrete
topology and therefore is always $\nu$-continuous ($\nu(\partial B)=0$).
To establish (\ref{eq:limit}), we follow an argument of Grimmett
in proving his Theorem 4.19 (a) in \cite{Grimmett} by first assuming
that $B$ is an increasing event. The Corollary \ref{cor:monotone}
then states that
\[
\mu_{n}(B)\le\mu_{n+1}(B).
\]
Therefore the limit (\ref{eq:limit}) holds for all increasing events
$B$. Since the set of increasing events forms a convergence-determining
class (Billingsley \cite{Billingsley}), (\ref{eq:limit}) must hold
for all the subsets in $\mathcal{F}$ for a probability measure $\nu$
on $(\Omega,\mathcal{F})$.
\end{proof}
We end this section with two more observations. 

First, suppose $\mathcal{S}'=(V',E',A',X',\mu')$ and $\mathcal{S}''=(V'',E'',A'',X'',\mu'')$
be two finite systems defined as before such that $V'\subset V''$,
$E'\subset E''$. Because of the monotonic properties of the functions
$a\to Var(X_{i}|a)$ and $a\to\delta_{ij}(a)$ implied in (\ref{eq:Var})
and (\ref{eq:deltaVar}), Corollary \ref{cor:monotone} implies that
for all $i,j\in V$, the variances of $X_{i}$ and $X_{i}-X_{j}$,
as functions of the edge set $E$, are decreasing in the sense that
\[
Var'(X_{i})\ge Var''(X_{i}),\ Var'(X_{i}-X_{j})\ge Var''(X_{i}-X_{j}).
\]

Finally, the formula (\ref{eq:sigma update}) demonstrates exactly
how $\Sigma'$ depends on $A_{i'j'}$. This observation leads to a
martingale representation for the quantity $\log|\Sigma(A)|$ as follows.
To state the result, let us label all edges in the graph in some (arbitrary)
order so that we can write $E=\{(i_{1},j_{1}),\cdots,(i_{k},j_{k})\}$.
\begin{prop}
Let $A=\{A_{ij}\}_{i,j\in V}$ be any symmetric random adjacency matrix
from the distribution $\mu_{A}$. For $k=1,...,n$, let 
\[
A^{(k)}=\sum_{l=1}^{k}A_{i_{l}j_{l}}(e_{i_{l}}-e_{j_{l}})(e_{i_{l}}-e_{j_{l}})^{T}
\]
and $\mathcal{F}_{k}=\sigma\{A_{i_{1}j_{1}},...,A_{i_{k}j_{k}}\}$.
Then $\{-\log|\Sigma(A^{(k)})|,\mathcal{F}_{k},k=1,...,n\}$ forms
a sub-martingale.\end{prop}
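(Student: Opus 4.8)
The plan is to verify the three defining properties of a sub-martingale with respect to the filtration $\{\mathcal{F}_k\}$: measurability, integrability, and the conditional increment inequality. The first two are immediate and I would dispose of them at once. For each $k$, the matrix $A^{(k)}$ is assembled only from $A_{i_1j_1},\dots,A_{i_kj_k}$, so $\Sigma(A^{(k)})=(\alpha I+\beta A^{(k)})^{-1}$, and hence $-\log|\Sigma(A^{(k)})|$, is $\mathcal{F}_k$-measurable. Since the edge variables range over the finite set $\{0,1\}^E$, there are only finitely many configurations, so $-\log|\Sigma(A^{(k)})|$ is bounded and integrability is automatic.

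The crux is the increment $-\log|\Sigma(A^{(k+1)})|+\log|\Sigma(A^{(k)})|$. First I would note that $Q(A^{(k)})=\alpha I+\beta A^{(k)}$ is positive definite for every realization: $\alpha I$ is positive definite and $\beta A^{(k)}$ is a nonnegative combination of the positive-semidefinite rank-one matrices $(e_{i_l}-e_{j_l})(e_{i_l}-e_{j_l})^{T}$. Passing from $A^{(k)}$ to $A^{(k+1)}$ is exactly a rank-one update of this precision matrix by $\beta A_{i_{k+1}j_{k+1}}(e_{i_{k+1}}-e_{j_{k+1}})(e_{i_{k+1}}-e_{j_{k+1}})^{T}$, which is precisely the situation covered by Lemma \ref{lem:lemma}. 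Writing $\delta^{(k)}\equiv(e_{i_{k+1}}-e_{j_{k+1}})^{T}\Sigma(A^{(k)})(e_{i_{k+1}}-e_{j_{k+1}})$ for the conditional variance of $X_{i_{k+1}}-X_{j_{k+1}}$ under $\Sigma(A^{(k)})$, the identity (\ref{eq:d update}) gives $|\Sigma(A^{(k+1)})|=(1+\beta\delta^{(k)})^{-A_{i_{k+1}j_{k+1}}}|\Sigma(A^{(k)})|$, and therefore
\[
-\log|\Sigma(A^{(k+1)})|+\log|\Sigma(A^{(k)})|=A_{i_{k+1}j_{k+1}}\,\log\!\left(1+\beta\delta^{(k)}\right).
\]

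Finally I would argue nonnegativity of this increment. Since $\delta^{(k)}$ is a variance we have $\delta^{(k)}\ge 0$; together with $\beta\ge 0$ this forces $\log(1+\beta\delta^{(k)})\ge 0$, and $A_{i_{k+1}j_{k+1}}\in\{0,1\}$ is nonnegative as well. Hence the increment is nonnegative along every sample path. As it is also $\mathcal{F}_{k+1}$-measurable, taking conditional expectations yields
\[
E\!\left[-\log|\Sigma(A^{(k+1)})|\,\middle|\,\mathcal{F}_k\right]\ge -\log|\Sigma(A^{(k)})|,
\]
which is the sub-martingale property; in fact the sequence is monotone nondecreasing pathwise, consistent with the fact that adding edges only shrinks the conditional variances, cf. (\ref{eq:deltaVar}). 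I do not expect a genuine obstacle. The single point requiring care is to confirm that Lemma \ref{lem:lemma}, originally phrased for the full precision matrix of a system $\mathcal{S}$, is purely algebraic (it rests only on Sylvester's determinant identity) and so transfers verbatim to the partial precision matrix $\alpha I+\beta A^{(k)}$ with edge $(i',j')=(i_{k+1},j_{k+1})$; once this is noted, the determinant-ratio identity delivers the result directly.
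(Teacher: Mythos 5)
Your proposal is correct and follows essentially the same route as the paper: both rest on the determinant-update identity (\ref{eq:d update}) of Lemma \ref{lem:lemma}, writing the increment as $A_{i_{k+1}j_{k+1}}\log\bigl(1+\beta\delta^{(k)}\bigr)$ with $\delta^{(k)}\ge 0$ a conditional variance, so the process is in fact pathwise nondecreasing and hence a sub-martingale. The only cosmetic difference is that the paper telescopes the identity into a single sum $-\log|\Sigma(A^{(k)})|=\sum_{l=1}^{k}A_{i_{l}j_{l}}\log[1+\beta\delta_{i_{l}j_{l}}(A^{(l-1)})]$ while you argue one increment at a time; the content is identical.
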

\begin{proof}
Applying (\ref{eq:d update}) repeatedly, we can write, for $k=1,...,n$,
\[
-\log\left|\Sigma(A^{(k)})\right|=\sum_{l=1}^{k}A_{i_{l}j_{l}}\log\left[1+\beta\delta_{i_{l}j_{l}}(A^{(l-1)})\right],
\]
where $A^{(0)}$ is defined to be the 0 matrix. The Proposition follows
by noting that $\delta_{i_{l}j_{l}}(A^{(l-1)})\in\mathcal{F}_{l-1}$
for $l=1,...,n$ (with $\mathcal{F}_{0}$ being the trivial $\sigma$-field)
and all the terms in the summation above are positive.\end{proof}

\end{document}